\newcommand{\subtitle}[1]{%
  \posttitle{%
    \par\end{center}
    \begin{center}\large#1\end{center}
    \vskip0.5em}%
}
\theoremstyle{plain}
\newtheorem{thm}{Theorem}[section]
\newtheorem{lem}[thm]{Lemma}
\newtheorem{cor}[thm]{Corollary}
\theoremstyle{definition}
\newtheorem{rem}[thm]{Remark}
\renewcommand{\P}{{\mathbb P}}
\newcommand{\expect}{\operatorname{\mathbb{E}}}
\DeclareMathOperator{\Uniform}{Unif}
\newcommand{\dint}{\,\mathup{d}}
\newcommand{\Boxes}{\mathcal B} 
\DeclareMathOperator{\dispersion}{disp}
\newcommand{\eps}{\varepsilon}
\newcommand{\euler}{\textup{e}}
\renewcommand{\rho}{\varrho}
\newcommand{\Q}{{\mathbb Q}}
\newcommand{\N}{{\mathbb N}}
\DeclareMathAlphabet{\mathup}{OT1}{\familydefault}{m}{n}
\newcommand{\wt}{\widetilde}
\newcommand{\widebar}[1]{\mbox{\kern1.5pt\hbox{\vbox{\hrule height 0.6pt \kern0.35ex
        \hbox{\kern-0.15em \ensuremath{#1 }\kern0.0em}}}}\kern-0.1pt}
\newlength{\fixboxwidth}
\definecolor{owngreen}{rgb}{0, 0.7, 0.2}
\begin{document}

\title{Expected dispersion of uniformly distributed points}

\author{Aicke Hinrichs\thanks{Institut f\"ur Analysis, 
Johannes Kepler Universit\"at Linz, 
Altenbergerstrasse~69, 4040~Linz, Austria, 
Email: \mbox{aicke.hinrichs@jku.at}, \mbox{david.krieg@jku.at}. 
},
David Krieg$^\ast$,
Robert J. Kunsch\thanks{Lehrstuhl f\"ur Mathematik der Informationsverarbeitung, Pontdriesch 10, 52062 Aachen, Germany, Email: \mbox{kunsch@mathc.rwth-aachen.de}}, 
Daniel Rudolf\thanks{Institute for Mathematical Stochastics, 
Georg-August-Universit\"at G\"ottingen, Goldschmidtstra\ss e 7, 37077 G\"ottingen, 
Email: daniel.rudolf@uni-goettingen.de}}

\date{\today}

\maketitle
\begin{abstract}
  The dispersion of a point set in $[0,1]^d$
  is the volume of the largest axis parallel box inside the unit cube
  that does not intersect with the point set. We study the expected dispersion 
  with respect to a random set of $n$ points
  determined by an i.i.d.\ sequence of uniformly distributed random variables.
  Depending on the number of points $n$ and the dimension $d$ 
  we provide an upper and lower bound of the expected dispersion.
  In particular, we show that the minimal number of points
  required to achieve an expected dispersion less than $\varepsilon\in(0,1)$
  depends linearly on the dimension $d$.
\end{abstract}

{\bf Keywords: } expected dispersion, dispersion, delta cover

{\bf Classification.
  Primary:
    62D05;
  Secondary:
    52B55,
    65Y20,
    68Q25.}

\section{Introduction and main result}
Given 
$n$ points $\{x_1,\dots,x_n\}\subset [0,1]^d$,
the dispersion is the volume of the largest 
axis parallel box
that does not contain a point.
It is defined by
\begin{equation}  \label{eq: def_disp}
  \dispersion(x_1,\dots,x_n) := \sup_{B\cap \{x_1,\dots,x_n\} = \emptyset} \lambda_d(B),
\end{equation}
where $\lambda_d$ denotes the $d$-dimensional Lebesgue measure and 
the supremum is taken over all boxes  $B=I_1\times \dots \times I_d$
with intervals $I_k\subseteq [0,1]$.
In this note we study the expected dispersion of random points
based on an i.i.d.\ sequence of uniformly distributed random variables $(X_i)_{i\in\N}$,
where each $X_i$ maps
from a common probability space $(\Omega,\mathcal{F},\P)$ to $[0,1]^d$.
For simplicity we write
$X_1,X_2,\ldots 
  \overset{\textup{iid}}{\sim} \Uniform([0,1]^d)$.
We ask for the behavior of
\begin{equation*}
  \expect(\dispersion(X_1,\dots,X_n))
\end{equation*}
in terms of $n$ and $d$.

In recent years the proof of existence and the construction
of point sets with small dispersion
attracted considerable attention, see
 \cite{AiHiRu15,Kr18,Ru18,Sos18,UllVyb18,UllVyb19}.
In order to describe optimality
of such point sets of cardinality $n$
in the $d$-dimensional setting,
let us define the minimal dispersion
\begin{equation*}
  \dispersion(n,d) := \inf_{\{x_1,\dots,x_n\} \subset [0,1]^d} \dispersion(x_1,\dots,x_n),
\end{equation*}   
and its inverse 
\begin{equation*}
 n(\eps,d) := \min\{n\in\N\mid \dispersion(n,d) \leq \eps\}, 
\end{equation*}
where $\eps \in(0,1)$. 
A lower bound for the minimal dispersion growing with the dimension $d$
is provided in \cite[Theorem~1]{AiHiRu15}.
Moreover, \cite[Section~4]{AiHiRu15} contains an upper bound due to Gerhard Larcher,
based on constructions of digital nets,
which give explicitly constructable point sets.
For $\eps\in(0,1/8)$ the bounds are 
\begin{equation} \label{eq: basic_lower_and_upper_bnd}
    2^{-3}\eps^{-1} \log_2 d \leq n(\eps,d) \leq 2^{7d+1} \eps^{-1}.
\end{equation}
Clearly, the dependence on $\eps^{-1}$ in \eqref{eq: basic_lower_and_upper_bnd} cannot be improved.
However, the upper bound grows exponentially in $d$, while the lower bound only grows logarithmically.
For large dimensions the upper bound can be improved significantly.
It is shown in \cite{Sos18,UllVyb18} that for fixed $\eps$
the quantity $n(\eps,d)$ increases at most logarithmically in~$d$.
This means that also the $d$-dependence of the lower bound in
\eqref{eq: basic_lower_and_upper_bnd} is optimal.

The results of \cite{Sos18,UllVyb18} are based on probabilistic arguments.
Namely, points are drawn uniformly at random from a regular grid
whose parameters depend on $\eps$ and $d$,
and it is shown that these points are suitable with positive probability.
By the use of a derandomization technique,
\cite{UllVyb19} provides a deterministic algorithm
for the construction of point sets with cardinality $c_\eps \log_2(d)$
and dispersion at most $\eps$,
where $c_\eps>0$ depends only polynomially on $\eps$.
Comparable results can be obtained via a careful translation
of statements on the so-called hitting set problem, see \cite{linial1997efficient}.
In Table~\ref{table: comparison} we survey explicit bounds
for $n(\eps,d)$,
in particular, it contains the ones of \cite{Sos18,UllVyb18}
and their dependence on $\eps$.  
\begin{table} 
\begin{center}
\begin{tabular}{lll} 
\toprule
Reference
  & Upper bound of $n(\eps,d)$
    & Remarks \\ 
\midrule 
\cite{AiHiRu15}
  & \multirow{2}{*}{$\lceil2^{7d+1}\,\eps^{-1}\rceil$}
    & optimal in $\eps^{-1}$\\
(due to Larcher)
  & & digital net construction
    \\
\midrule
\cite{Kr18}
  &     $\min\{(2d)^{\lceil \log_2(\eps^{-1})-1 \rceil}\,,\,
                \eps^{-1}\lceil \log_2(\eps^{-1}) \rceil^{d-1}\}$
    &  sparse grid construction
      \\     
\midrule
\cite{Ru18}
  & $8\,d\,\eps^{-1}\log(33\,\eps^{-1})$
    &  existence of point set \\                  
\midrule
\multirow{2}{*}{\cite{Sos18}}
  & \multirow{2}{*}{
    $\log_2(d)
            \,\left\lceil\eps^{-1}\right\rceil^{(\lceil\eps^{-1}\rceil^2+2)}
            \,(4 \log\lceil\eps^{-1}\rceil+1)$}
    & optimal in $d$\\
  & & existence of point set\\
\midrule
\multirow{2}{*}{\cite{UllVyb18}}
  & \multirow{2}{*}{$2^7\log_2(d)\,\eps^{-2}(1+\log_2(\eps^{-1}))^2$}
    & optimal in $d$\\
  & & existence of point set\\ 
    \bottomrule
  \end{tabular}  
  \end{center}
\label{table: comparison}
\caption{The table contains several upper bounds on $n(\eps,d)$ based on existence results of ``good'' points as well as explicit constructions.}
\end{table}

In one way or another, most of these upper bounds
rely on randomly drawn points and probabilistic arguments.
In particular, the estimate of \cite[Corollary~1]{Ru18}
is based on an i.i.d.\ sequence of
random variables uniformly distributed on $[0,1]^d$.
Maybe this is the most canonical randomly chosen point set
and one might ask how good it is compared to deterministic point sets.
Here, the measure of goodness is the expected dispersion
and our main result
reads as follows:
\begin{thm} \label{thm: main_result}
For any $n> d$ we have
\begin{equation*}
  \max\left\{\frac{\log(n)}{9\, n},\frac{d}{2\euler\, n}\right\}
    \leq \expect(\dispersion(X_1,\dots,X_n))
    \leq \frac{9d}{n} \log\left(\frac{\euler\, n}{d}\right).
\end{equation*}
\end{thm}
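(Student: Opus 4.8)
The plan is to establish the two lower bounds separately and take their maximum, and to prove the upper bound by a discretization ($\delta$-cover) argument. For the first lower bound $\frac{\log n}{9n}$ I would simply project onto one coordinate: the first coordinates of $X_1,\dots,X_n$ are i.i.d.\ uniform on $[0,1]$, and every gap between consecutive order statistics (including the two boundary gaps) produces an empty slab $I\times[0,1]^{d-1}$ whose volume equals the length of that gap. Hence $\dispersion(X_1,\dots,X_n)$ is at least the largest spacing of $n$ uniform points in $[0,1]$, whose expectation is the classical $\frac{H_{n+1}}{n+1}\ge\frac{\log(n+1)}{n+1}\ge\frac{\log n}{9n}$.

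The second lower bound $\frac{d}{2\euler\,n}$ is the more substantial one, and I would obtain it from a corner box chosen adaptively. Passing to $Y_{i,k}=-\log X_{i,k}$ (i.i.d.\ standard exponentials), the anchored box $\prod_k[0,\euler^{-t_k})$ of volume $\euler^{-\sum_k t_k}$ is empty precisely when every point $i$ has some coordinate $k$ with $Y_{i,k}\le t_k$. I would assign each point to its smallest coordinate $c_i=\argmin_k Y_{i,k}$ and set $t_k=\max\{Y_{i,k}: c_i=k\}$, which makes this box empty by construction, so $\dispersion\ge \euler^{-\sum_k t_k}$. Using that for i.i.d.\ exponentials the minimum $W_i=\min_k Y_{i,k}\sim\mathrm{Exp}(d)$ is independent of the minimizer $c_i$, and conditioning on the group sizes $N_k\sim\mathrm{Bin}(n,1/d)$, one finds $\expect[\sum_k t_k]=\expect[H_{N_1}]\le 1+\log(\expect[N_1]+1)=\log\!\big(\tfrac{\euler(n+d)}{d}\big)$ by Jensen applied to $\log$. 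A second application of Jensen to the convex function $\euler^{-x}$ then gives $\expect(\dispersion)\ge \euler^{-\expect[\sum_k t_k]}\ge\frac{d}{\euler(n+d)}\ge\frac{d}{2\euler\,n}$, the last step using $n>d$.

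For the upper bound I would take the grid of mesh $1/n$ as an inner $\delta$-cover: snapping any box inward to grid lines preserves emptiness and loses at most $\delta=2d/n$ in volume, so $\dispersion\le\delta+\max\{\lambda_d(B): B \text{ an empty grid box}\}$. Writing $N(t)$ for the number of grid boxes of volume exceeding $t$, a union bound with $\P(B\text{ empty})=(1-\lambda_d(B))^n\le \euler^{-nt}$ together with the layer-cake formula yields $\expect(\dispersion)\le\delta+\frac{\log N(t^\ast)+1}{n}$, where $t^\ast$ solves $N(t^\ast)\euler^{-nt^\ast}=1$. The essential point is that large grid boxes are rare: from $N(t)=\sum_{\prod_k \ell_k> tn^d}\prod_k(n+1-\ell_k)$, a Markov/generating-function bound with a free parameter $\theta$ gives $\log N(t)\le 2d\log\!\big(\tfrac{n\log(1/t)}{2d}\big)+O(d)$, and with $t^\ast\approx\frac{2d}{n}\log(n/d)$ this delivers $\expect(\dispersion)\lesssim\frac{d}{n}\log(\euler n/d)$.

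The hard part is exactly this counting step. A crude cover of size $n^{2d}$, or a uniform grid whose resolution is sent to $0$, only produces the weaker bound $\frac{d}{n}\log n$; this is insufficient when $d$ is a large power of $n$, since there $\log n$ genuinely exceeds $\log(\euler n/d)$ by more than a constant factor. The improvement hinges on exploiting that boxes of volume $\ge t$ are exponentially rarer than arbitrary boxes, which is why both the choice of mesh $1/n$ (balancing the discretization error $\delta$ against the term $\tfrac1n\log N$) and the generating-function estimate on $N(t)$ are crucial. The remaining work—solving the implicit equation for $t^\ast$ and tracking the absolute constants down to the stated $9$ and $\tfrac{1}{2\euler}$—is routine but needs to be carried out with some care.
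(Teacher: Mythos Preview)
Your proposal is correct, but each of the three parts takes a different route from the paper's.

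For the first lower bound, you use the exact formula $\expect[\text{max spacing}]=H_{n+1}/(n+1)$ for $n$ uniform points on $[0,1]$; this is cleaner than the paper's argument, which partitions $[0,1]^d$ into $\ell$ equal slabs and invokes the coupon collector's problem with Chebyshev's inequality to show one slab stays empty with probability $>1/2$ when $n\le (H_\ell-2)\ell$. Your approach even yields a better constant.

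For the $d$-dependent lower bound, you in fact construct the \emph{same} empty anchored box as the paper's Lemma~2.4 (your $c_i=\argmin_k Y_{i,k}$ equals their $j^\ast(i)=\argmax_k X_{i,k}$, and $\euler^{-t_k}=a_k$). The difference is in the analysis: the paper crudely bounds the volume below by $\prod_{i=1}^{\ell} X_i^\ast$ (discarding the group structure), which only gives $\euler^{-\ell/d}$ and is useless for $\ell=n\gg d$; they compensate with a separate pigeonhole reduction lemma from $n$ to $\ell=d$ points. Your grouping-by-argmin plus two applications of Jensen exploits the independence of $W_i$ and $c_i$ to get $\frac{d}{\euler(n+d)}$ directly from $n$ points, bypassing the reduction entirely---a genuinely sharper analysis of the same construction.

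For the upper bound, the paper simply quotes Gnewuch's bracketing-number bound $|\Gamma_\delta|\le(6\euler/\delta)^{2d}$ as a black box and sets $\delta=6d/n$, so $\log|\Gamma_\delta|\le 2d\log(\euler n/d)$ immediately. Your self-contained route via the mesh-$1/n$ grid works, but the crucial step is the refined count $N(t)\le\binom{\lfloor n\log(1/t)\rfloor+2d}{2d}$; this follows because $\prod_k\ell_k>tn^d$ forces $\sum_k(n-\ell_k)<n\log(1/t)$, and then the generating function $\sum_{j\ge 0}(j+1)x^j=(1-x)^{-2}$ gives the binomial bound---which is exactly what your ``Markov/generating-function'' hint points to. This recovers (a version of) Gnewuch's estimate by hand; the trade-off is more work for a more self-contained proof. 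Tracking constants to reach exactly $9$ is indeed routine from here, though for $n$ only slightly larger than $d$ the target bound exceeds $1$ and is trivial anyway.
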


Let us also state our result in terms of the inverse of the expected dispersion.
For $\eps\in(0,1)$ and $d\in\N$,
the inverse of the expected dispersion is defined as
\begin{equation*}
  N(\eps,d)
    := \min\{n\in\N\mid \expect\dispersion(X_1,\dots,X_n) \leq \eps \}.
\end{equation*}

\begin{cor} \label{cor: main_result}
For all $\eps \in (0,\frac{1}{9\euler})$
and $d\in \N$ we have
\begin{equation*}
  \max\left\{\frac{1}{9\eps} \log\left(\frac{1}{9\eps}\right) ,\,
    \frac{d}{2\euler\,\eps}\right\}
    \leq N(\eps,d)
    \leq \left\lceil 9 (1+\euler^{-1})\,\frac{d}{\eps}
                       \log\left(\frac{9(\euler + 1)}{\eps}\right)
         \right\rceil. 
\end{equation*}
\end{cor}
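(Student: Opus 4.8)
The plan is to invert the two-sided estimate of Theorem~\ref{thm: main_result}. Write $D_n:=\expect(\dispersion(X_1,\dots,X_n))$ and note that $D_n$ is non-increasing in $n$, since adjoining a point can only shrink the empty boxes; thus $n\mapsto D_n$ passes the threshold $\eps$ precisely at $N(\eps,d)$. The upper bound on $N(\eps,d)$ I would get by exhibiting a concrete $n$ with $D_n\le\eps$ through the theorem's upper estimate, and the two lower bounds by showing that every $n$ below the claimed value forces $D_n>\eps$ through the theorem's two lower estimates. A single elementary fact, $\log u\le \euler^{-1}u$ for all $u>0$ (equality at $u=\euler$), underlies the upper bound and explains the constants.

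For the upper bound I would put $c:=9(1+\euler^{-1})$ and $C:=9(\euler+1)$, observe the identity $C=\euler\,c$, and test $n_\ast:=c\,\frac d\eps\log\frac C\eps$ (taking the ceiling later only enlarges $n$). Using $C=\euler c$ one finds $\frac{\euler n_\ast}{d}=\frac C\eps\log\frac C\eps$ and $\frac{\eps n_\ast}{9d}=(1+\euler^{-1})\log\frac C\eps$, whence the target inequality $\frac{9d}{n_\ast}\log\frac{\euler n_\ast}{d}\le\eps$ simplifies to $\log\log\frac C\eps\le\euler^{-1}\log\frac C\eps$. With $u=\log\frac C\eps>0$ this is exactly $\log u\le\euler^{-1}u$, so it holds. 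Since $n\mapsto\frac{9d}{n}\log\frac{\euler n}{d}$ decreases for $n>d$ and $n_\ast>d$, replacing $n_\ast$ by its ceiling keeps $D\le\eps$, giving $N(\eps,d)\le\lceil n_\ast\rceil$.

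For the lower bounds I would handle the two terms of the maximum separately. The inequality $N(\eps,d)\ge\frac{d}{2\euler\eps}$ is immediate: $D_n\le\eps$ and the theorem's bound $D_n\ge\frac{d}{2\euler n}$ force $n\ge\frac{d}{2\euler\eps}$. For the term $\frac1{9\eps}\log\frac1{9\eps}$, set $n_0:=\frac1{9\eps}\log\frac1{9\eps}$ and evaluate the bound $\frac{\log n}{9n}$ at $n_0$; a short computation yields $\frac{\log n_0}{9n_0}=\eps\left(1+\frac{\log\log\frac1{9\eps}}{\log\frac1{9\eps}}\right)>\eps$ because $\frac1{9\eps}>\euler$, so $n_0$ points cannot reach dispersion $\eps$.

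The delicate point, and the step deserving most care, is that $\frac{\log n}{9n}$ is not monotone: it increases on $(0,\euler)$ and decreases afterwards, so $\frac{\log n}{9n}\le\eps$ by itself does not rule out small $n$. To close this I would first show $N(\eps,d)>d$, which both legitimizes applying Theorem~\ref{thm: main_result} at $n=N(\eps,d)$ and removes the left branch: by monotonicity $D_n\ge D_{d+1}\ge\frac{d}{2\euler(d+1)}\ge\frac1{4\euler}>\frac1{9\euler}>\eps$ for all $n\le d$. For $d<n<\euler$ the bound $\frac{d}{2\euler n}>\frac1{2\euler^2}>\eps$ settles it, and for $\euler\le n<n_0$ the value $n$ lies on the decreasing branch below the larger root of $\frac{\log n}{9n}=\eps$, so $\frac{\log n}{9n}>\eps$. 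Combining these cases with the monotonicity of $D_n$ shows $D_n>\eps$ for every $n<n_0$, hence $N(\eps,d)\ge n_0$; taking the maximum of the two lower bounds finishes the proof.
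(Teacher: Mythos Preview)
Your proposal is correct and follows essentially the same route as the paper: invert the upper bound of Theorem~\ref{thm: main_result} by testing $n_\ast=c\,\frac{d}{\eps}\log\frac{c\euler}{\eps}$ with $c=9(1+\euler^{-1})$ and reducing to $\log u\le\euler^{-1}u$, and invert each lower bound separately while handling the non-monotone branch of $\frac{\log n}{9n}$. Your treatment of small $n$ is in fact slightly more careful than the paper's (which simply remarks that $D_1,D_2\ge D_3$), since you first secure $N(\eps,d)>d$ via the dimension-dependent bound and then dispose of the range $d<n<\euler$ explicitly before appealing to the decreasing branch.
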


These estimates show that
$N(\eps,d)$ for fixed $\eps$
behaves linearly w.r.t.\ the dimension,
and for fixed $d$ behaves like $\eps^{-1}\log(\eps^{-1})$.
It is interesting to note that the linear behavior w.r.t.\ $d$
is in contrast to the $\log_2(d)$ dependence of the inverse of the minimal dispersion. 

The upper bound of Theorem~\ref{thm: main_result} follows 
by exploiting a $\delta$-cover approximation and a concentration inequality
stated in \cite{Ru18}.
The proof of the lower bound is separated into two parts.
First, we derive the bound $\log(n)/(9n)$
from well known results on the coupon collector's problem.
After that the $d$-dependent lower bound $d/(2\euler\,n)$
is proven by a 
reduction to the expected dispersion of $d$ points
and, eventually, a constant lower bound for this quantity.

The proof of Theorem~\ref{thm: main_result},
  along with the necessary notation,
is given in Section~\ref{sec: proof}.
Further discussions and extensions of the results
are provided in Section~\ref{sec: notes_rem}.

\section{Proof of Theorem~\ref{thm: main_result}}
\label{sec: proof}
\subsection{The upper bound}
Before we start with the proof of the upper bound
let us provide some further notation.
Let $\Boxes$ be the set of boxes given as follows,
\begin{equation*}
  \Boxes
    := \left\{ \prod_{k=1}^d [a^{(k)},b^{(k)}) \subseteq [0,1]^d \mid 
         a^{(k)},b^{(k)}\in \Q\cap [0,1], k=1,\dots,d
       \right\}.
\end{equation*}
Then, obviously, we have
\begin{equation*}
  \dispersion(x_1,\dots,x_n)
    = \sup_{\substack{B\in\Boxes \\ B \cap \{x_1,\dots,x_n\} = \emptyset}}
        \lambda_d(B).
\end{equation*}
Note that with this we can restrict ourself to boxes determined by half-open intervals
with rational boundary values.
Thus, the supremum within the dispersion is only taken over a countable set,
which leads to the measurability of the mapping
$(x_1,\dots,x_n)\mapsto \dispersion(x_1,\dots,x_n)$.
Occasionally, we also call $\Boxes$ the set of test sets.
Let $\delta\in(0,1]$, 
  then a \emph{$\delta$-cover}
of the set of test sets $\Boxes$ 
is given by a finite set $\Gamma_\delta\subset \Boxes$ that
satisfies 
\begin{equation*}
  \forall B\in \Boxes \quad\exists L_B, U_B\in \Gamma_\delta
  \quad \text{with} \quad L_B\subseteq B \subseteq U_B
  \quad \text{and} \quad \lambda(U_B\setminus L_B) \leq \delta.
\end{equation*} 
Furthermore,
for $x_1,\dots,x_n\in[0,1]^d$ and a $\delta$-cover $\Gamma_\delta$ for $\Boxes$
define
\begin{equation*}
  \dispersion_{\delta} (x_1,\dots,x_n)
    := \sup_{\substack{A\in \Gamma_\delta \\
              A\cap\{x_1,\dots,x_n\}=\emptyset}}
         \lambda_d(A).  
\end{equation*} 
Having introduced those quantities we state two results from \cite{Ru18}. 
From $\Gamma_\delta$ being a $\delta$-cover
it follows that
\begin{equation} \label{eq: disp_delta_cover}
  \dispersion(x_1,\dots,x_n) \leq \delta + \dispersion_{\delta}(x_1,\dots,x_n),
\end{equation}
and, 
  via a union bound,
it follows that for any $s\in(0,1)$ we have
\begin{equation} \label{eq: concentration}
  \P\left(\dispersion_\delta(X_1,\dots,X_n)>s\right)
    \leq |\Gamma_\delta| (1-s)^n.
\end{equation}
We refer to \cite[Lemma~1]{Ru18} and the proof of \cite[Theorem~1]{Ru18} for details.
These
results lead to the following lemma.
\begin{lem}  \label{lem: upp_bnd_delta_cover}
  Let $\delta\in(0,1]$ and assume that the set $\Gamma_\delta$ is a $\delta$-cover of $\Boxes$. 
  Then, for any $n\geq \log | \Gamma_\delta|$ we have
  \begin{equation*}
    \expect(\dispersion(X_1,\dots,X_n))
      \leq \delta + \frac{\log| \Gamma_\delta |}{n} + \frac{1}{n+1}.
  \end{equation*} 
\end{lem}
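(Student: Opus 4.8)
The plan is to integrate the two facts already recorded, namely the pointwise estimate \eqref{eq: disp_delta_cover} and the concentration inequality \eqref{eq: concentration}. First I would take expectations in \eqref{eq: disp_delta_cover} to get
\[
  \expect(\dispersion(X_1,\dots,X_n))
    \leq \delta + \expect(\dispersion_\delta(X_1,\dots,X_n)),
\]
which reduces the task to bounding $\expect(\dispersion_\delta(X_1,\dots,X_n))$. Since $\dispersion_\delta$ is the Lebesgue measure of a subset of the unit cube it takes values in $[0,1]$, so I would express its expectation through the layer-cake (tail) formula
\[
  \expect(\dispersion_\delta(X_1,\dots,X_n))
    = \int_0^1 \P\!\left(\dispersion_\delta(X_1,\dots,X_n) > s\right)\dint s,
\]
where the integration stops at $1$ because the tail probability vanishes for $s\geq 1$.

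Next I would split this integral at the threshold $s_0 := \log|\Gamma_\delta|/n$, which lies in $(0,1]$ precisely because of the hypothesis $n \geq \log|\Gamma_\delta|$. On $[0,s_0]$ I use the trivial estimate $\P(\,\cdot\, > s)\leq 1$, contributing exactly $s_0 = \log|\Gamma_\delta|/n$. On $[s_0,1]$ I insert \eqref{eq: concentration} and compute the elementary integral
\[
  \int_{s_0}^1 |\Gamma_\delta|\,(1-s)^n \dint s
    = \frac{|\Gamma_\delta|\,(1-s_0)^{n+1}}{n+1}.
\]

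Finally I would estimate $(1-s_0)^{n+1}\leq \euler^{-s_0(n+1)}$ using $1-x\leq \euler^{-x}$, and by the choice of $s_0$ this gives $|\Gamma_\delta|\,\euler^{-s_0(n+1)} = |\Gamma_\delta|^{-1/n}\leq 1$, so the second integral is at most $1/(n+1)$. Summing the three contributions $\delta$, $\log|\Gamma_\delta|/n$ and $1/(n+1)$ yields the claimed inequality. I expect the only genuinely delicate point to be the calibration of $s_0$: it must be large enough that $|\Gamma_\delta|\,\euler^{-s_0(n+1)}$ stays bounded (forcing roughly $s_0\geq \log|\Gamma_\delta|/n$) yet small enough that the first contribution $s_0$ does not exceed the target term, and the value $s_0=\log|\Gamma_\delta|/n$ is exactly this balance point; the assumption $n\geq\log|\Gamma_\delta|$ is what keeps $s_0$ a legitimate point of $(0,1]$ so that the split and the tail bound apply.
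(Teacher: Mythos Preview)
Your proposal is correct and follows essentially the same route as the paper: take expectations in \eqref{eq: disp_delta_cover}, write $\expect(\dispersion_\delta)$ via the tail integral, split at $s_0=\log|\Gamma_\delta|/n$, bound the short piece trivially and the long piece via \eqref{eq: concentration}, then use $1-x\le \euler^{-x}$ to absorb the factor $|\Gamma_\delta|$. The paper phrases the last step as $(1-a/n)^n\le \euler^{-a}$ with $a=\log|\Gamma_\delta|$, which is the same estimate in slightly different clothing.
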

\begin{proof}
  From \eqref{eq: disp_delta_cover} we have
  \begin{equation*}
    \expect(\dispersion(X_1,\dots,X_n))
      \leq \delta + \expect(\dispersion_\delta(X_1,\dots,X_n)).
  \end{equation*}
  Furthermore, by using \eqref{eq: concentration} we obtain
  \begin{align*}
    \expect&(\dispersion_\delta(X_1,\dots,X_n))
      = \int_0^{1} \P\left(\dispersion_\delta(X_1,\dots,X_n) > s\right) \dint s \\
      &\leq \frac{\log|\Gamma_\delta|}{n}
              + \int_{(\log|\Gamma_\delta|)/n}^1
                    \P\left( \dispersion_\delta(X_1,\dots,X_n) > s \right)
                  \dint s\\
    & \leq \frac{\log|\Gamma_\delta|}{n}
            + |\Gamma_\delta| \int_{(\log|\Gamma_\delta|)/n}^1 (1-s)^n \dint s
      \leq \frac{\log|\Gamma_\delta|}{n}
            + \frac{|\Gamma_\delta|}{n+1}
                \left(1-\frac{\log|\Gamma_\delta|}{n}\right)^{n+1}.
  \end{align*}  
  Note that for any $0\leq a\leq n$ we have $(1-a/n)^n\leq \exp(-a)$,
  hence,
  \begin{equation*}
    | \Gamma_\delta| \left(1-\frac{\log|\Gamma_\delta|}{n} \right)^{n+1} \leq 1,
  \end{equation*}
  which finishes the proof.
\end{proof}

\begin{rem}
  Except for the assumption that we have a $\delta$-cover,
  we did not use any property of the set of test sets $\Boxes$.
\end{rem}
Now,
the upper bound of Theorem~\ref{thm: main_result} is deduced by the results
on $\delta$-covers for $\Boxes$ from Gnewuch, see \cite{Gn08}.
Namely,
from \cite[Formula (1), Theorem 1.15 with $d! \geq (d/\euler)^d$, and Lemma 1.18]{Gn08}
one obtains that
there is a $\delta$-cover for $\Boxes$ with
  $|\Gamma_\delta| \leq (6\euler\, \delta^{-1})^{2d}$.
By setting $\delta = 6d/n$,
the upper bound of Theorem~\ref{thm: main_result} follows
with Lemma~\ref{lem: upp_bnd_delta_cover} and $n\geq d$.

Finally, this upper estimate implies the upper bound of Corollary~\ref{cor: main_result}.
For the convenience of the reader, we add 
a few arguments. 
We are looking for
preferably small integers $n \geq d$
such that
\begin{equation*}
  \frac{9d}{n} \log\left(\frac{\euler\, n}{d}\right) \leq \eps \,.
\end{equation*}
Note that the term on the left-hand side of this inequality
  is monotonically decreasing for~$n \geq d$.
  Let $c \geq 1$ be a constant,
  then, any integer
\begin{equation*}
  n \geq c \, \frac{d}{\eps} \log\left(\frac{c\, \euler}{\eps}\right) 
\end{equation*}
  will also comply with $n \geq d$, and hence satisfies the estimate
\begin{equation*}
  \frac{9d}{n} \log\left(\frac{\euler\, n}{d}\right)
    \leq \frac{9}{c} \, \eps
       \cdot \left(1 + \frac{\log \log\left(\frac{c\,\euler}{\eps}\right)
                             }{\log\left(\frac{c\,\euler}{\eps}\right)}
             \right)
    \leq \frac{9(1+\euler^{-1})}{c} \, \eps \,,
\end{equation*}
where we used that $(\log x)/x$ attains its maximum for~$x = \euler$.
Choosing the constant $c = 9(1+\euler^{-1}) = 12.31...$
we obtain the desired guarantee.

\subsection{The lower bound}

In Section~\ref{sec: low_bnd_n} we show that
$\expect(\dispersion(X_1,\dots,X_n)) \geq \frac{\log(n)}{9 n}$,
and in Section~\ref{sec: low_bnd_d} we prove that 
$\expect(\dispersion(X_1,\dots,X_n)) \geq \frac{d}{2\euler \, n}$ for $n>d$.
Both lower bounds together yield the corresponding statement
of Theorem~\ref{thm: main_result}.
By convention, all random variables are defined on a common probability space
$(\Omega,\mathcal{F},\P)$.

\subsubsection{Lower bound without dimension dependence}
\label{sec: low_bnd_n}

We start with 
an auxiliary tool,
using results on the \emph{coupon collector's problem}.
\begin{lem} \label{lem: low_bnd_lem_n}
  For $\ell\in \N$ let $(Y_i)_{i\in\N}$ be an i.i.d.\ sequence
  of uniformly distributed random variables in $\{1,\dots,\ell\}$. 
  Define $H_\ell := \sum_{j=1}^\ell j^{-1}$ and
  \begin{equation*}
    \tau_{\ell} := \min\{k\in\N\mid \{Y_1,\dots,Y_k\}= \{1,\dots,\ell\}\}.
  \end{equation*}  
  Then, for any integer $n\leq (H_{\ell} -2)\ell$ 
  we have $\P(\tau_\ell>n) > 1/2$. 
\end{lem}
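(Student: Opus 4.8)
The plan is to exploit the classical decomposition of the coupon collector time into independent geometric waiting times and then close the gap with a one-sided Chebyshev (Cantelli) bound. First I would write $\tau_\ell = \sum_{i=1}^\ell G_i$, where $G_i$ is the number of additional draws needed to obtain the $i$-th distinct value once $i-1$ distinct values have already appeared. These $G_i$ are independent and geometric with success probability $p_i = (\ell - i + 1)/\ell$, so that
\begin{equation*}
  \expect(\tau_\ell) = \sum_{i=1}^\ell p_i^{-1}
    = \ell \sum_{m=1}^\ell \frac{1}{m} = \ell\, H_\ell .
\end{equation*}
This identifies the mean and shows that the threshold in the hypothesis sits a controlled distance below it.

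Next I would bound the variance. Using $\Var(G_i) = (1-p_i)/p_i^2 \le p_i^{-2}$ together with independence,
\begin{equation*}
  \Var(\tau_\ell) = \sum_{i=1}^\ell \frac{1-p_i}{p_i^2}
    \le \ell^2 \sum_{m=1}^\ell \frac{1}{m^2}
    \le \frac{\pi^2}{6}\,\ell^2 .
\end{equation*}
The role of the assumption $n \le (H_\ell - 2)\ell$ is now transparent: it guarantees that the deviation $t := \expect(\tau_\ell) - n = \ell H_\ell - n$ satisfies $t \ge 2\ell$, i.e.\ the threshold $n$ lies at least a full $2\ell$ below the mean.

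Then I would apply Cantelli's inequality $\P(\tau_\ell \le \expect(\tau_\ell) - t) \le \Var(\tau_\ell)/(\Var(\tau_\ell) + t^2)$ with $t \ge 2\ell$. Since $v \mapsto v/(v + 4\ell^2)$ is increasing in $v$, combining this with the variance bound gives
\begin{equation*}
  \P(\tau_\ell \le n)
    \le \frac{\Var(\tau_\ell)}{\Var(\tau_\ell) + 4\ell^2}
    \le \frac{\pi^2/6}{\pi^2/6 + 4}
    = 0.291\ldots < \frac{1}{2},
\end{equation*}
and passing to the complement yields $\P(\tau_\ell > n) > 1/2$, as claimed.

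The computations are routine; the only delicate point is the final numerical step, namely verifying that the constant $\tfrac{\pi^2/6}{\pi^2/6 + 4}$ is \emph{strictly} below $1/2$. This is comfortable, but it also reveals that the slack $2\ell$ encoded in the hypothesis is exactly what lets this crude second-moment estimate do the job; a substantially smaller slack would not suffice with the variance bound $\pi^2\ell^2/6$. An alternative would be a Bonferroni lower bound of the form $\P(\tau_\ell > n) \ge \ell(1-1/\ell)^n - \binom{\ell}{2}(1-2/\ell)^n$ applied to the events ``value $j$ is never drawn in the first $n$ steps'', but the variance approach is cleaner and makes the dependence on the hypothesis explicit, so I would prefer it.
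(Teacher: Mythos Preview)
Your proof is correct and follows essentially the same approach as the paper: compute $\expect(\tau_\ell)=\ell H_\ell$ and $\Var(\tau_\ell)\le \tfrac{\pi^2}{6}\ell^2$ via the geometric decomposition, then use a second-moment tail bound to show $\P(\tau_\ell\le n)<1/2$. The only difference is that the paper applies the two-sided Chebyshev inequality, obtaining $\P(\tau_\ell\le n)\le \pi^2/24\approx 0.411$, whereas you use the one-sided Cantelli form and get the slightly sharper $\tfrac{\pi^2/6}{\pi^2/6+4}\approx 0.291$; structurally the arguments are identical.
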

\begin{proof}
  It is well known that the mean and the variance of $\tau_\ell$ satisfy
  \begin{equation*}
    \expect\tau_\ell = \ell H_\ell 
    \quad \text{and} \quad
    \mathrm{Var}\, \tau_\ell \leq \ell^2 \sum_{j=1}^\ell j^{-2}
    \leq \frac{\pi^2}{6} \ell^2 \,.
  \end{equation*}
  For details concerning these estimates,
  see for example \cite{LePeWi09} or \cite[Proposition~4.7]{Kr19Phd}.
  Then, for $n \leq (H_\ell - 2) \ell$, by Chebyshev's inequality we have
  \begin{align*}
    \P(\tau_\ell \leq n)
      \leq \P(\tau_\ell \leq (H_\ell - 2) \ell)
      = \P(\ell H_\ell - \tau_\ell \geq 2\ell)
      \leq \frac{\mathrm{Var}(\tau_\ell)}{4 \ell^2}
      \leq \frac{\pi^2}{24} 
      < \frac{1}{2},
  \end{align*}
  which finishes the proof.
\end{proof}
By means of the previous result
we are able to prove the desired lower bound in the following lemma.
\begin{lem} \label{lem:dim_indep_LB}
  For any integer $n \geq 3$ 
  we have $\expect(\dispersion(X_1,\dots,X_n)) > \frac{\log(n)}{9 \, n}$.
\end{lem}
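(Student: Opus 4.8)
The plan is to reduce the $d$-dimensional problem to a one-dimensional covering problem and then invoke Lemma~\ref{lem: low_bnd_lem_n}. Fix an integer $\ell$, to be chosen later, and partition $[0,1]$ into the $\ell$ congruent subintervals $[(j-1)/\ell,j/\ell)$, $j=1,\dots,\ell$. Let $Y_i\in\{1,\dots,\ell\}$ record the index of the subinterval containing the first coordinate of $X_i$. Since the first coordinates of the $X_i$ are i.i.d.\ uniform on $[0,1]$, the variables $Y_1,Y_2,\dots$ are i.i.d.\ uniform on $\{1,\dots,\ell\}$ and fit exactly the setting of Lemma~\ref{lem: low_bnd_lem_n}. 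The key geometric observation is that whenever some index $j$ is missing from $\{Y_1,\dots,Y_n\}$, the box $[(j-1)/\ell,j/\ell)\times[0,1)^{d-1}\in\Boxes$ contains none of the points $X_1,\dots,X_n$ and has volume $1/\ell$; hence $\{\tau_\ell>n\}\subseteq\{\dispersion(X_1,\dots,X_n)\geq 1/\ell\}$.

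First I would turn this inclusion into a bound on the expectation. Applying $\expect Z\geq t\,\P(Z\geq t)$ to the nonnegative variable $Z=\dispersion(X_1,\dots,X_n)$ with $t=1/\ell$, together with the inclusion above, yields
\begin{equation*}
  \expect(\dispersion(X_1,\dots,X_n))\geq \frac{1}{\ell}\,\P(\tau_\ell>n).
\end{equation*}
By Lemma~\ref{lem: low_bnd_lem_n}, as soon as $\ell$ is large enough that $n\leq(H_\ell-2)\ell$, the right-hand side exceeds $\frac{1}{2\ell}$. Since $(H_\ell-2)\ell$ is increasing in $\ell$, enlarging $\ell$ only relaxes this constraint, so the strategy is to take $\ell$ as large as the target bound permits and then verify that the constraint still holds.

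Concretely, I would choose $\ell=\lfloor 9n/(2\log n)\rfloor$, the largest integer with $\frac{1}{2\ell}\geq\frac{\log n}{9n}$, so that the desired conclusion $\expect(\dispersion(X_1,\dots,X_n))>\frac{1}{2\ell}\geq\frac{\log n}{9n}$ follows once the coupon-collector constraint is checked for this $\ell$. The main obstacle is precisely this verification: one must show $(H_\ell-2)\ell\geq n$ for $\ell=\lfloor 9n/(2\log n)\rfloor$ and every integer $n\geq 3$. Here I would invoke a standard lower bound for the harmonic numbers, such as $H_\ell>\log\ell$, to reduce the claim to an inequality of the form $(\log\ell-2)\ell\geq n$; substituting the chosen value of $\ell$ and writing $\log\ell$ as $\log n-\log\log n+\log(9/2)$ up to lower-order terms, this becomes an elementary inequality in $n$ whose worst case is governed by the bound $\frac{\log\log n}{\log n}\leq 1/\euler$. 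The generous constant $9$, as opposed to the heuristic value $2$ arising from $\frac{1}{2\ell}$, is exactly what provides enough slack to absorb the $\log\log n$ correction, the rounding in the choice of $\ell$, and the $-2$ term in Lemma~\ref{lem: low_bnd_lem_n}, ensuring that the inequality holds for all $n\geq 3$ rather than only asymptotically.
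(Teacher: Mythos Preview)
Your proposal is correct and follows essentially the same route as the paper: reduce to a one-dimensional coupon collector via slabs, apply Lemma~\ref{lem: low_bnd_lem_n} to get $\expect(\dispersion)>\frac{1}{2\ell}$, and choose $\ell$ of order $n/\log n$. The only real difference is that the paper picks $\ell=\big\lceil\frac{(1+\euler)\,n}{\log n}\big\rceil$, which makes the coupon-collector constraint $n\leq(H_\ell-2)\ell$ verifiable by the single clean inequality $\log\!\big(\frac{(1+\euler)x}{\log x}\big)-2\geq\frac{\log x}{1+\euler}$ (with equality at $x=\exp(1+1/\euler)$), whereas your maximal choice $\ell=\lfloor 9n/(2\log n)\rfloor$ leaves that verification as an asymptotic-plus-small-cases check that you sketch but do not carry out.
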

\begin{proof}
  Fix some $\ell \in\N$ and
  split $[0,1]^d$ into $\ell$ disjoint boxes $B_1,\dots,B_\ell$
  of equal volume~$1/\ell$
  (e.g.\ split along the first coordinate).
   For $i=1,\ldots,n$ define the random variable $Y_i: \Omega \to \{1,\ldots,\ell\}$
    that indicates the box the point $X_i$ lies in,
    i.e.\ \mbox{$X_i(\omega) \in B_{Y_i(\omega)}$}.
  Note that $Y_1,\dots,Y_n$ are i.i.d.\ 
  and each uniformly distributed in $\{1,\dots,\ell\}$.
  Furthermore, for $\omega\in \Omega$ satisfying 
  \begin{equation*}
      \{ Y_1(\omega),\dots,Y_n(\omega) \} \not = \{1,\dots,\ell\} \,,
  \end{equation*}
  there is an index $r\in \{1,\dots,\ell\}$ such that
  $
    \{X_1(\omega),\dots,X_n(\omega)\} \cap B_r =\emptyset.
  $
  Thus, for such an $\omega$ we obtain
  \begin{equation*}
    \dispersion(X_1(\omega),\dots,X_n(\omega)) \geq 1/\ell.
  \end{equation*}
  This yields
  \begin{align*}
    \expect (\dispersion (X_1,\dots,X_n))
    & = \int_\Omega \dispersion(X_1(\omega),\dots,X_n(\omega)) \P(\dint \omega) \\
    & \geq \frac{1}{\ell} \; \P(\{Y_1,\dots,Y_n\}\not = \{1,\dots,\ell\}).
   \end{align*}
   Observe that with $\tau_\ell$ defined in Lemma~\ref{lem: low_bnd_lem_n} we have
   \begin{equation*}
     \P(\{Y_1,\dots,Y_n\}\not = \{1,\dots,\ell\})
       = \P(\tau_\ell > n).
   \end{equation*}
  Choosing $\ell := \left\lceil \frac{(1+\euler)\, n}{\log(n)} \right\rceil$,
   we get
   \begin{align*}
     \frac{n}{\ell}
      \leq \frac{\log(n)}{1 + \euler} 
      \leq \log\left(\frac{(1 + \euler)\,n}{\log(n)}\right)-2
      \leq \log(\ell) - 2
      < H_\ell-2,
   \end{align*}
   where we used the inequality
     $\log\left(\frac{(1 + \euler)\,x}{\log(x)}\right) - 2 - \frac{\log(x)}{1+\euler}
       \geq 0$ for $x > 1$
    (attaining equality in~$x = \exp(1+1/\euler)$),
    as well as $H_\ell = \sum_{j=1}^{\ell} j^{-1} > \log (\ell+1)$.
  This asserts $n \leq (H_\ell - 2)\ell$, 
  and by Lemma~\ref{lem: low_bnd_lem_n} we obtain
  $\P(\tau_\ell >n) > 1/2$. 
  Taking everything together yields
  \begin{align*}
    \expect (\dispersion (X_1,\dots,X_n))
      > \frac{1}{2\ell} \geq \frac{1}{2} \cdot \frac{\log(n)}{(1+\euler)\,n+\log(n)}
      > \frac{\log(n)}{9\,n},
  \end{align*}
 which completes the proof.
 Our derivation holds for integers $n \geq 2$,
   but the bound starts decaying for~$n \geq 3$, in the first place.
\end{proof}  
Having the result of the previous lemma,
the first part
within the maximum of the lower bound in Corollary~\ref{cor: main_result} follows.
For the convenience of the reader we add a few arguments.
If the expected dispersion shall be smaller than a given~$\eps > 0$,
  the number of points, $n$, must satisfy~$\frac{\log n}{9n} \leq \eps$.
  Note that the left-hand side is monotonically 
  decreasing only for~$n \geq \euler$,
  but the expected dispersion for \mbox{$n \in \{1,2\}$}
  should be larger or equal the expected dispersion for~$n=3$.
  Restricting to \mbox{$\eps \in (0,\frac{1}{9\euler})$},
  for $\euler \leq n < \frac{1}{9\eps} \log\left(\frac{1}{9\eps}\right)$ we would have
  \begin{equation*}
    \frac{\log n}{9n}
      > \eps \cdot \left(1 + \frac{\log \log \left(\frac{1}{9\eps}\right)
                                 }{\log \left(\frac{1}{9\eps}\right)}
                   \right)
      > \eps \,.
  \end{equation*}
  Hence, $n \geq \frac{1}{9\eps} \log\left(\frac{1}{9\eps}\right)$ is necessary
  for the expected dispersion to be less or equal~$\eps$.

  \begin{rem}
    In the strong asymptotic regime, the prefactor~$\frac{1}{9}$
    in Lemma~\ref{lem:dim_indep_LB} vanishes, i.e.
    \begin{equation*}
      \expect(\dispersion(X_1,\dots,X_n)) \gtrsim \frac{\log(n)}{n}
      \qquad\text{for $n \to \infty$.}
    \end{equation*}
    This result can be deduced by considering the volume~$V_n$
    of the largest empty box of the shape~$B = (a,b)\times[0,1]^{d-1}$,
    with $0 < a < b < 1$.
    This quantity~$V_n$ is equivalent to the distribution
    of the maximal distance between adjacent points
    when distributing $n+1$ random points on a circle with perimeter~$1$.
    We can use an asymptotic result by Schlemm~\cite[Cor.~1]{Schl14}
    on the largest gap of random points on a circle,
    namely that $(n+1) V_n - \log(n+1)$ converges
    to a standard Gumbel distribution with its
    expectation being  the Euler-Mascheroni constant
    $\gamma \approx 0.5772$.
    Hence,
    \begin{equation*}
      \expect(\dispersion(X_1,\dots,X_n))
        \geq \expect V_n
        \simeq \frac{\gamma + \log(n+1)}{n+1}
        \simeq \frac{\log(n)}{n}
      \qquad\text{for $n \to \infty$.}
    \end{equation*}
\end{rem}

\subsubsection{Dimension-dependent lower bound}
\label{sec: low_bnd_d}

The proof of the lower bound w.r.t.\ the dimension is 
divided into two steps.
First, we deduce a lower bound
for 
the expected dispersion of $n$ points
in terms of the expected dispersion of $d$ points, see Lemma~\ref{lem:reduction_to_n=d}.
Thus, we reduce the problem to finding a lower bound
for 
the expected dispersion of $d$ points,
which then is the goal of the second step, see Lemma~\ref{lem:exp_disp_d_points}.
In the following proof, 
for $B\in\Boxes$ and $x_1,\dots,x_\ell$
with $\ell\in\N$, we use the notation
\begin{equation*}
  \dispersion_{\mid B}(x_1,\dots,x_\ell)
    :=\sup_{\substack{R\in\Boxes\cap B \\
                      R\cap\{x_1,\dots,x_\ell\} = \emptyset}} 
        \lambda_d(R)
\end{equation*}
for the dispersion restricted to $B$. 
The following reduction lemma is a probabilistic version of \cite[Lemma~1]{AiHiRu15}.
\begin{lem}  \label{lem:reduction_to_n=d}
  For any $n,\ell\in \N$ we have
  \begin{equation*}
    \expect(\dispersion(X_1,\dots,X_n)) \geq \frac{\ell+1}{n+\ell+1} \,\expect(\dispersion(X_1,\dots,X_\ell)).
  \end{equation*}
\end{lem}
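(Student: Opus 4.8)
The plan is to bound the dispersion of the $n$ points from below by its restriction to a random ``slab'' that captures exactly $\ell$ of the points and that, after conditioning, looks like $\ell$ uniform points in a rescaled cube. I may assume $\ell\le n$, since for $\ell>n$ the claim is immediate: with $\{X_1,\dots,X_n\}\subseteq\{X_1,\dots,X_\ell\}$, adding points can only shrink empty boxes, so $\expect(\dispersion(X_1,\dots,X_n))\ge\expect(\dispersion(X_1,\dots,X_\ell))$, while $\frac{\ell+1}{n+\ell+1}\le 1$.

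Let $U_1\le\dots\le U_n$ denote the order statistics of the first coordinates $X_1^{(1)},\dots,X_n^{(1)}$, and put $U_0:=0$, $U_{n+1}:=1$. Consider the slab $S:=[0,U_{\ell+1})\times[0,1]^{d-1}$. By construction, a point $X_i$ lies in $S$ if and only if its first coordinate is among the $\ell$ smallest, so exactly $\ell$ of the points lie in $S$ while all remaining points have first coordinate at least $U_{\ell+1}$ and lie outside. Since every box $R\subseteq S$ avoiding $\{X_1,\dots,X_n\}$ is an admissible test box for the unrestricted dispersion, I obtain the pointwise bound $\dispersion(X_1,\dots,X_n)\ge\dispersion_{\mid S}(X_1,\dots,X_n)$, whose right-hand side depends only on the $\ell$ points inside $S$.

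Next I would condition on $U_{\ell+1}=b$. By the standard conditional law of uniform order statistics, the $\ell$ smallest first coordinates are then distributed as the order statistics of $\ell$ i.i.d.\ $\Uniform(0,b)$ variables; discarding the irrelevant labelling and recalling that the remaining $d-1$ coordinates are i.i.d.\ $\Uniform([0,1])$ and independent of all first coordinates, the $\ell$ points inside $S$ are i.i.d.\ uniform on $S=[0,b)\times[0,1]^{d-1}$. The affine map rescaling $S$ onto $[0,1]^d$ sends axis-parallel boxes to axis-parallel boxes and multiplies $d$-dimensional volumes by $b^{-1}$, while sending these $\ell$ points to $\ell$ i.i.d.\ $\Uniform([0,1]^d)$ points. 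Hence $\dispersion_{\mid S}(X_1,\dots,X_n)=b\cdot\dispersion(Z_1,\dots,Z_\ell)$ for $Z_1,\dots,Z_\ell\overset{\textup{iid}}{\sim}\Uniform([0,1]^d)$, whose law does not depend on $b$, so that
\begin{equation*}
  \expect\bigl(\dispersion_{\mid S}(X_1,\dots,X_n)\mid U_{\ell+1}\bigr)
    = U_{\ell+1}\cdot\expect(\dispersion(X_1,\dots,X_\ell)).
\end{equation*}

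Taking expectations and using $\expect(U_{\ell+1})=\frac{\ell+1}{n+1}$ for the $(\ell+1)$-th order statistic of $n$ uniform variables, I get
\begin{equation*}
  \expect(\dispersion(X_1,\dots,X_n))
    \ge \frac{\ell+1}{n+1}\,\expect(\dispersion(X_1,\dots,X_\ell))
    \ge \frac{\ell+1}{n+\ell+1}\,\expect(\dispersion(X_1,\dots,X_\ell)),
\end{equation*}
which is the claim (and in fact slightly stronger). The step that needs the most care is the conditional factorisation: one must argue rigorously that, conditionally on the endpoint $U_{\ell+1}$, the enclosed points are genuinely i.i.d.\ uniform in $S$, and that $\dispersion_{\mid S}$ depends on them only through the rescaled configuration, so that its conditional expectation equals the width $U_{\ell+1}$ times the constant $\expect(\dispersion(X_1,\dots,X_\ell))$. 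The remaining ingredients---measurability of $\dispersion_{\mid S}$, the pointwise inequality $\dispersion\ge\dispersion_{\mid S}$, and the mean of a uniform order statistic---are routine.
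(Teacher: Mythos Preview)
Your argument is correct and in fact yields the sharper constant $\frac{\ell+1}{n+1}$ in place of $\frac{\ell+1}{n+\ell+1}$. The route, however, differs from the paper's. The paper fixes a deterministic partition of $[0,1]^d$ into $m=\lceil(n+1)/(\ell+1)\rceil$ equal-volume boxes, uses the pigeonhole principle to locate a box $B_j$ containing at most $\ell$ of $X_1,\dots,X_n$, and then extends the infinite sequence $(X_i)_{i\in\N}$ to harvest exactly $\ell$ hits of $B_j$; rescaling gives $\frac{1}{m}\ge\frac{\ell+1}{n+\ell+1}$ times the expected dispersion of $\ell$ uniform points. Your approach instead lets the data pick the sub-box: the slab $[0,U_{\ell+1})\times[0,1]^{d-1}$ automatically contains exactly $\ell$ points, and the conditional uniform law of order statistics turns these into $\ell$ i.i.d.\ uniforms in the slab without any pigeonhole step or appeal to the tail of the sequence. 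This sidesteps the (somewhat delicate) issue in the paper of arguing that the hits of $B_j$ remain i.i.d.\ uniform even though the index $j$ is sample-dependent, and it exploits $\expect U_{\ell+1}=\frac{\ell+1}{n+1}$ directly, which is why you gain the extra factor $\frac{n+\ell+1}{n+1}$. The conditional factorisation you flag as ``the step that needs the most care'' is indeed the crux, but it follows cleanly from independence of coordinates and the standard fact that, given $U_{\ell+1}=b$, the smaller $\ell$ order statistics are order statistics of an i.i.d.\ $\Uniform(0,b)$ sample.
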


\begin{proof}
  We start with a purely combinatorial argument,
  a version of the pigeonhole principle.
  If we split $[0,1]^d$ into $m$ boxes $B_1,\ldots,B_m$ of equal volume,
  then there is some $j \in \{1,\ldots,m\}$ such that $B_j$ contains no more
  than $\lfloor n/m \rfloor$ of the points $X_1,\ldots,X_n$.
  Choosing
  $m = \lceil \frac{n+1}{\ell+1}\rceil = \lfloor \frac{n}{\ell + 1} \rfloor + 1$,
  we have
  \mbox{$\lfloor \frac{n}{m} \rfloor
           \leq \lfloor \frac{n}{n+1}(\ell+1) \rfloor
           \leq \ell$}.
  For \mbox{$k\in \N$}, let $n_k\in\N$ be the time when $B_j$ is hit
  by the sequence $(X_i)_{i \in \N} \subset [0,1]^d$
  for the $k$-th time
  (which for \mbox{$X_1,X_2,\ldots \overset{\textup{iid}}{\sim} \Uniform([0,1]^d$)} 
  almost surely happens).
  With $n_\ell\ge n$, by the choice of~$B_j$, we have
  \begin{equation*}
    \dispersion(X_1,\ldots,X_n) 
      \geq \dispersion_{\mid B_j}(\{X_1,\ldots,X_n\}\cap B_j) 
      \geq \dispersion_{\mid B_j}(X_{n_1},\ldots,X_{n_\ell}).
  \end{equation*}
  Let $T$ be 
  an affine transformation
  that maps $B_j$ onto $[0,1]^d$.
  Then
  \begin{equation*}
    \dispersion_{B_j}(X_{n_1},\ldots,X_{n_\ell})
      = \lambda_d(B_j) \cdot \dispersion(TX_{n_1},\ldots,TX_{n_\ell}).
  \end{equation*}
  Recall that $X_1,X_2,\ldots \overset{\textup{iid}}{\sim} \Uniform([0,1]^d)$,
  hence, 
  the points $TX_{n_1},\ldots,TX_{n_\ell}$ are independent
  and uniformly distributed in $[0,1]^d$.
  Taking the expectation and using
  $\lambda_d(B_j)
      = \frac{1}{m}
      \geq \frac{1}{n/(\ell+1) + 1} 
      = \frac{\ell+1}{n+\ell+1}$,
  yields the statement.
\end{proof}

Having the previous lemma at hand,
it is sufficient to provide a constant lower bound
for the expected dispersion of $d$ points.
In a slightly more general way,
we obtain the following.
\begin{lem}  \label{lem:exp_disp_d_points}
  For any $d,\ell\in\N$
  and $X_1,\ldots, X_{\ell} \overset{\textup{iid}}{\sim} \Uniform([0,1]^d)$
  we have
  \begin{equation*}
    \expect(\dispersion(X_1,\dots,X_{\ell})) \geq \euler^{-\ell/d}.
  \end{equation*}
\end{lem}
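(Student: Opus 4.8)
The plan is to exhibit, for almost every realization, an explicit empty axis-parallel box anchored at the origin whose volume is bounded below by an easily analyzed random variable. Writing $X_i = (X_i^{(1)},\dots,X_i^{(d)})$, I would pass to the exponential variables $E_i^{(k)} := -\log X_i^{(k)}$, which are i.i.d.\ $\mathrm{Exp}(1)$. A box of the form $B = \prod_{k=1}^d [0,\euler^{-t_k})$ with $t_k \ge 0$ has volume $\euler^{-\sum_k t_k}$, and a point $X_i$ lies in $B$ precisely when $E_i^{(k)} > t_k$ for every $k$. Hence $B$ avoids all points as soon as each point is ``cut off'' in at least one coordinate, and the task reduces to choosing cut levels $t_k$ with small total $\sum_k t_k$.

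The crucial step is the choice of the cutting coordinates. For each $i$ let $K_i := \argmin_{k} E_i^{(k)}$ (equivalently, the coordinate in which $X_i$ is largest) and set $M_i := \min_k E_i^{(k)} = E_i^{(K_i)}$, so that $M_i \sim \mathrm{Exp}(d)$ and $M_1,\dots,M_\ell$ are independent. Defining $t_k := \max\{M_i : K_i = k\}$ (and $t_k := 0$ if no point is assigned to $k$) makes the corresponding box $B$ empty, because point $i$ satisfies $E_i^{(K_i)} = M_i \le t_{K_i}$, so $X_i \notin B$. Since the index sets $\{i : K_i = k\}$ partition $\{1,\dots,\ell\}$ and all $M_i \ge 0$, the maxima are dominated by the sums, giving $\sum_k t_k \le \sum_i M_i$. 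Therefore $\dispersion(X_1,\dots,X_\ell) \ge \lambda_d(B) = \euler^{-\sum_k t_k} \ge \euler^{-\sum_i M_i}$ pointwise.

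Taking expectations and using independence, the right-hand side factorizes: $\expect \euler^{-\sum_i M_i} = (\expect \euler^{-M_1})^\ell = (d/(d+1))^\ell$, where I evaluate the Laplace transform of $\mathrm{Exp}(d)$ at $1$. The proof then concludes with the elementary estimate $d/(d+1) \ge \euler^{-1/d}$ (equivalent to $\log(1+1/d) \le 1/d$), which yields $(d/(d+1))^\ell \ge \euler^{-\ell/d}$, as required.

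The main obstacle I anticipate is guessing the right cutting rule rather than carrying out the computation. A naive assignment of points to coordinates in fixed groups, or cutting every coordinate at a common level, produces only bounds of order $(\ell/d+1)^{-d}$ or $1/\ell$, which are far weaker than the claimed constant $\euler^{-1}$ in the regime $\ell \le d$. The key realization is that cutting each point in the direction of its largest coordinate reduces its individual ``cost'' to $M_i = \min_k E_i^{(k)}$, whose expectation is only $1/d$, so the total expected budget $\expect\sum_i M_i = \ell/d$ matches the target exactly; the partition inequality $\sum_k t_k \le \sum_i M_i$ is what converts this per-point accounting into a volume bound, and checking it together with the emptiness of $B$ is the one place requiring care.
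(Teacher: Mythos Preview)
Your proof is correct and is essentially the same argument as the paper's, written in logarithmic coordinates. Your cutting index $K_i$ is the paper's $j^{\ast}(i)$, your threshold $\euler^{-t_k}$ coincides with the paper's $a_k$, and your bound $\euler^{-\sum_i M_i}=\prod_i X_i^{\ast}$ is exactly the product the paper bounds via $\expect X_i^{\ast}=d/(d+1)$; only the parameterization (exponential variables and a Laplace transform versus the Beta$(d,1)$ mean) differs.
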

\begin{proof}
  For all $i \in \{1,\ldots,\ell\}$, 
  let $X_i^{\ast}$ denote the largest coordinate of $X_i$, i.e.,
  \begin{equation*}
    X_i^{\ast} := \max\{ X_i^{(1)},\ldots,X_i^{(d)}\}.
  \end{equation*}
  We choose $j^{\ast}(i)\leq d$ such that $X_i^{(j^{\ast}(i))}=X_i^{\ast}$.
  Let us consider the box 
  \begin{equation*}
    B=\prod_{j=1}^d [0,a_j) \,,
  \end{equation*}
  where 
  \begin{equation*}
    a_j := \min\left(\{1\} \cup \bigl\{X_i^{\ast} \,\big|\,
                                    i\leq \ell \text{ with } j^{\ast}(i) = j
                             \bigr\}
               \right)\,.
  \end{equation*}
  This box is empty, since for all $i\leq \ell$ 
  we have $X_i^{(j^{\ast}(i))} \geq a_{j^{\ast}(i)}$,
  and hence $X_i \not\in B$.
  An illustration for the case $d=2$ 
  is provided in Figure~\ref{fig: no_point}.
  On the other hand, the volume of $B$ is given by
  \begin{equation*}
    \lambda^d(B) = \prod_{j=1}^d a_j = \prod_{i \in I} X_i^{\ast} \,,
  \end{equation*}
  where $I$ is 
  a suitable subset of $\{1,\ldots,\ell\}$.
  This yields
  \begin{equation*}
    \dispersion(X_1,\dots,X_{\ell})
      \geq \prod_{i \in I} X_i^{\ast} \geq \prod_{i=1}^{\ell} X_i^{\ast}.
  \end{equation*}
  The random numbers $X_i^{\ast}$ are independent and beta distributed
  with parameters $\alpha=d$ and $\beta=1$, 
  in particular, $\expect(X_i^{\ast})=1-1/(d+1)$. Hence, 
  \begin{align*}
    \expect(\dispersion(X_1,\dots,X_{\ell}))
      &\geq \prod_{i=1}^{\ell} \expect( X_i^{\ast})
      = \left(1 - \frac{1}{d+1}\right)^{\ell}
      = \left(\frac{1}{1+\frac{1}{d}}\right)^{\ell} \\
      &\geq \left(\frac{1}{\exp(1/d)}\right)^{\ell}
      = \euler^{-\ell/d} \,.
   \qedhere
 \end{align*}
\end{proof}
The proof of the lower bound follows by setting $\ell=d$
and combining the results of the two lemmas.
We readily get
\begin{equation*}
  \expect(\dispersion(X_1,\dots,X_n)) \geq \frac{d+1}{\euler \,(n+d+1)}
    > \frac{d}{2\euler \, n} \,,
\end{equation*}
where the last inequality follows from $n>d$.
 For $\eps \in (0,\frac{1}{2\euler})$,
  the respective inverse lower bound \mbox{$N(\eps,d) \geq \frac{d}{2\euler\,\eps}$}
  is straightforward, where the restriction on~$\eps$ implies~$N(\eps,d) > d$.
\begin{figure} 
  \begin{minipage}{0.45\textwidth}
    \vspace*{3.8ex}
    \includegraphics[width=\textwidth]{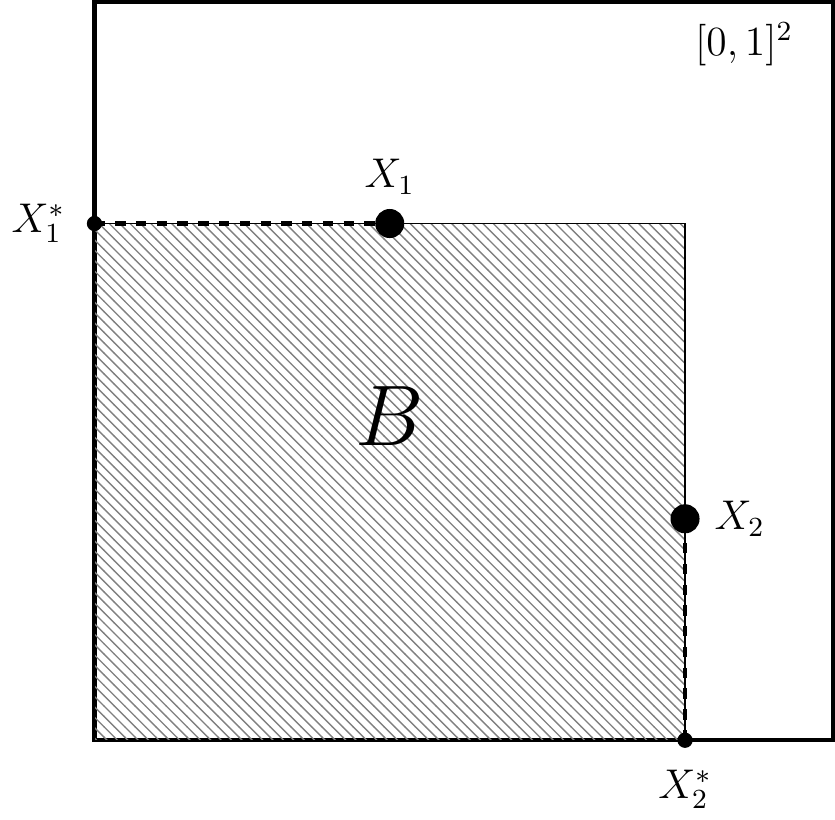}
  \end{minipage}
  \hfill
  \begin{minipage}{0.45\textwidth}
    \includegraphics[width=\textwidth]{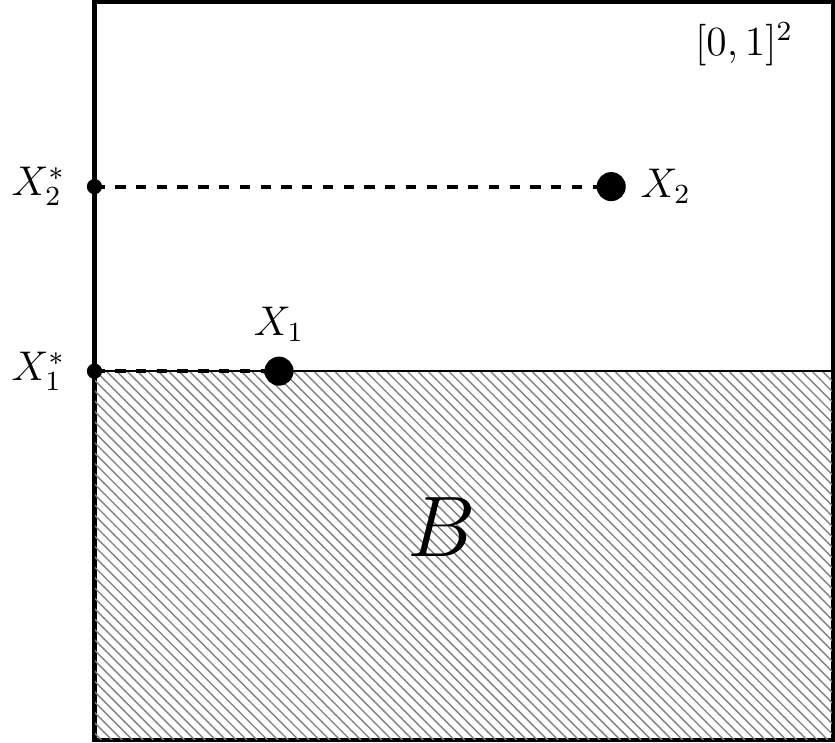}
  \end{minipage}
\caption{An illustration of the empty box construction
  from Lemma~\ref{lem:exp_disp_d_points}
  for $d=\ell=2$, featuring two different types of situations.
  In the left picture we have $B=[0,a_1)\times[0,a_2)$
  with $X_1=(0.4,0.7)$, $j^*(1)=2$, $a_1=0.7$
  and $X_2=(0.8,0.3)$, $j^*(2)=1$, $a_2=0.8$.
  In the right picture we have $B=[0,a_1)\times[0,a_2)$
  with $X_1=(0.25,0.5)$, $j^*(1)=2$, $a_1=0.5$
  and $X_2=(0.7,0.75)$, $j^*(1)=2$, $a_2=1$.}
\label{fig: no_point}
\end{figure}

\section{Notes and remarks} 
\label{sec: notes_rem}
The dispersion of a point set, as defined in \eqref{eq: def_disp},
has been introduced in \cite{RoTi96}, generalizing the work of \cite{Hl76}.
The renewed interest in this quantity emerged from its appearance
in the construction of algorithms for the approximation of rank-one tensors,
see \cite{BaDaDeGr14,KrRu18,NoRu16},
where the dependence on the dimension is crucial. 
It is also related to the universal discretization problem,
see \cite{Temla18JoC},
and the fixed volume discrepancy, see \cite{Temla17,TemlaUll19}.

The dispersion of a point set has also been studied on the torus
instead of the unit cube, see for example \cite{BreHi19,Ul15}.
This setting can be described on 
the unit cube by choosing 
another set of test sets, namely 
\begin{equation*}
  \wt{\Boxes} := \left\{ \prod_{k=1}^d I_k(x,y) \mid x=(x^{(1)},\dots,x^{(d)}),y=(y^{(1)},\dots,y^{(d)})\in [0,1]^d\cap \Q^d \right\},
\end{equation*}
with 
\begin{equation*}
I_k(x,y) = \begin{cases}
(x^{(k)},y^{(k)}) & x^{(k)}<y^{(k)},\\
[0,1]\setminus [y^{(k)},x^{(k)}] & y^{(k)}\leq x^{(k)}.
\end{cases}
\end{equation*}
The set $\wt{\Boxes}$ is called the test set of
\emph{periodic boxes}. Since the proof of the upper bound of the expected dispersion
depends on $\Boxes$ only through the $\delta$-cover,
with the same arguments 
we can also derive      
an upper bound 
for the expected dispersion w.r.t.\ $\wt{\Boxes}$
by using an appropriate \emph{periodic} 
 $\delta$-cover.
For $x_1,\dots,x_n\in [0,1]^d$ define
\begin{equation*}
\widetilde{\dispersion}(x_1,\dots,x_n) := \sup_{\substack{B\in\widetilde{\Boxes} \\ B \cap \{x_1,\dots,x_n\} = \emptyset}}
\lambda_d(B).
\end{equation*}
With \cite[Lemma~2]{Ru18},
we obtain that there is a $\delta$-cover
$\widetilde{\Gamma}_{\delta}$ of $\widetilde{\Boxes}$
  with cardinality
  $|\widetilde{\Gamma}_{\delta}| \leq \left(4d\delta^{-1}\right)^{2d}$,
so that with $\delta = 2d/n$ we have
\begin{equation*}
\expect(\widetilde{\dispersion}(X_1,\dots,X_n)) \leq \frac{5d}{n} \log(2n).
\end{equation*}
By the fact that $\Boxes \subset \widetilde{\Boxes}$ we obtain for any $x_1,\dots,x_n\in [0,1]^d$ that 
\begin{equation*}
\dispersion(x_1,\dots,x_n) \leq \widetilde{\dispersion}(x_1,\dots,x_n),
\end{equation*}
hence, the lower bounds of Theorem~\ref{thm: main_result} also carry over to 
$\expect(\widetilde{\dispersion}(X_1,\dots,X_n))$.
Here it is worth mentioning that the lower bound w.r.t.\ the dimension
can also be deduced from \cite[Theorem~1]{Ul15}. 
Thus, in this setting also
a linear dimension-dependence is present in $\expect(\widetilde{\dispersion}(X_1,\dots,X_n))$. 
However, 
concerning the inverse of the expected dispersion in the periodic case,
the precise growth 
w.r.t.\ the dimension remains open,
we only know that 
it is between $d$ and $d\log(d)$.

\section*{Acknowledgements}

We thank Mario Ullrich and the referees for their valuable suggestions. 
Parts of the work have been done at the Dagstuhl seminar ``Algorithms and Complexity for Continuous Problems''
in August 2019, where we enjoyed the stimulating research environment. 
A.~Hinrichs and D.~Krieg are supported by the Austrian Science Fund (FWF) Project F5513-N26, which is a part of the Special Research Program ``Quasi-Monte Carlo Methods: Theory and Applications''.
Daniel Rudolf gratefully acknowledges support of the 
Felix-Bernstein-Institute for Mathematical Statistics in the Biosciences
(Volks\-wagen Foundation) and the Campus laboratory AIMS.

%

\providecommand{\bysame}{\leavevmode\hbox to3em{\hrulefill}\thinspace}
\providecommand{\MR}{\relax\ifhmode\unskip\space\fi MR }
\providecommand{\MRhref}[2]{%
	\href{http://www.ams.org/mathscinet-getitem?mr=#1}{#2}
}
\providecommand{\href}[2]{#2}

\end{document}